\let\origsection=\section \def\section{\@ifstar{\origsection*}{\mysection}}
\def\mysection{\@startsection{section}{1}\z@{.7\linespacing\@plus\linespacing}{.5\linespacing}{\normalfont\scshape\centering\S}}
\renewcommand{\PrintDOI}[1]{\doi{#1}}
\numberwithin{equation}{section}
\numberwithin{figure}{section}
\definecolor{codegreen}{rgb}{0,0.6,0}
\definecolor{codelightgray}{gray}{0.9}
\lstdefinestyle{Maplestyle}{
    backgroundcolor=\color{codelightgray},
    commentstyle=\color{codegreen},
    keywordstyle=\color{black}\bfseries,
    stringstyle=\color{blue},
    basicstyle=\ttfamily\footnotesize,
    breakatwhitespace=false,
    breaklines=true,
    captionpos=b,
    keepspaces=true,
    showspaces=false,
    showstringspaces=false,
    showtabs=false,
    tabsize=4
}
\lstdefinelanguage{Maple}%
{morekeywords={and,assuming,break,by,catch,description,do,done,%
elif,else,end,error,export,fi,finally,for,from,global,if,%
implies,in,intersect,local,minus,mod,module,next,not,od,%
option,options,or,proc,quit,read,return,save,stop,subset,then,%
to, try,union,use,uses,while,xor},%
sensitive=true,%
morecomment=[l]\#,%
morestring=[b]",%
morestring=[d]"%
}[keywords,comments,strings]%
\def\greek#1{\expandafter\@greek\csname c@#1\endcsname}
\def\Greek#1{\expandafter\@Greek\csname c@#1\endcsname}
\def\@greek#1{\ifcase#1
	\or $\alpha$%
	\or $\beta$%
	\or $\gamma$%
	\or $\delta$%
	\or $\epsilon$%
	\or $\zeta$%
	\or $\eta$%
	\or $\theta$%
	\or $\iota$%
	\or $\kappa$%
	\or $\lambda$%
	\or $\mu$%
	\or $\nu$%
	\or $\xi$%
	\or $o$%
	\or $\pi$%
	\or $\rho$%
	\or $\sigma$%
	\or $\tau$%
	\or $\upsilon$%
	\or $\phi$%
	\or $\chi$%
	\or $\psi$%
	\or $\omega$%
\fi}
\def\@Greek#1{\ifcase#1
	\or $\mathrm{A}$%
	\or $\mathrm{B}$%
	\or $\Gamma$%
	\or $\Delta$%
	\or $\mathrm{E}$%
	\or $\mathrm{Z}$%
	\or $\mathrm{H}$%
	\or $\Theta$%
	\or $\mathrm{I}$%
	\or $\mathrm{K}$%
	\or $\Lambda$%
	\or $\mathrm{M}$%
	\or $\mathrm{N}$%
	\or $\Xi$%
	\or $\mathrm{O}$%
	\or $\Pi$%
	\or $\mathrm{P}$%
	\or $\Sigma$%
	\or $\mathrm{T}$%
	\or $\mathrm{Y}$%
	\or $\Phi$%
	\or $\mathrm{X}$%
	\or $\Psi$%
	\or $\Omega$%
\fi}
\AddEnumerateCounter{\greek}{\@greek}{24}
\AddEnumerateCounter{\Greek}{\@Greek}{12}
\let\polishlcross=\l
\def\l{\ifmmode\ell\else\polishlcross\fi}
\def\paragraph#1{%
  \noindent\textbf{#1.}\enspace}
\let\setminus=\smallsetminus
\def\moverlay{\mathpalette\mov@rlay}
\def\mov@rlay#1#2{\leavevmode\vtop{   \baselineskip\z@skip \lineskiplimit-\maxdimen
   \ialign{\hfil$\m@th#1##$\hfil\cr#2\crcr}}}
\newcommand{\charfusion}[3][\mathord]{
    #1{\ifx#1\mathop\vphantom{#2}\fi
        \mathpalette\mov@rlay{#2\cr#3}
      }
    \ifx#1\mathop\expandafter\displaylimits\fi}
\DeclareFontFamily{U}  {MnSymbolC}{}
\DeclareSymbolFont{MnSyC}         {U}  {MnSymbolC}{m}{n}
\DeclareFontShape{U}{MnSymbolC}{m}{n}{
    <-6>  MnSymbolC5
   <6-7>  MnSymbolC6
   <7-8>  MnSymbolC7
   <8-9>  MnSymbolC8
   <9-10> MnSymbolC9
  <10-12> MnSymbolC10
  <12->   MnSymbolC12}{}
\DeclareMathSymbol{\powerset}{\mathord}{MnSyC}{180}
\let\epsilon=\varepsilon
\let\eps=\epsilon
\let\rho=\varrho
\let\theta=\vartheta
\let\E=\EE
\def\PP{{\mathds P}}
\newcommand{\bin}{\mathrm{Bin}}
\theoremstyle{plain}
\newtheorem{thm}{Theorem}[section]
\newtheorem{prop}[thm]{Proposition}
\newtheorem{fact}[thm]{Fact}
\newtheorem{lemma}[thm]{Lemma}
\theoremstyle{definition}
\newtheorem{rem}[thm]{Remark}
\newtheorem{exmp}[thm]{Example}
\newtheorem{prob}[thm]{Problem}
\let\phi=\varphi
\begin{document}

\title{Long twins in random words}

\author{Andrzej Dudek}
\address{Department of Mathematics, Western Michigan University, Kalamazoo, MI, USA}
\email{\tt andrzej.dudek@wmich.edu}
\thanks{The first author was supported in part by Simons Foundation Grant \#522400.}

\author{Jaros\l aw Grytczuk}
\address{Faculty of Mathematics and Information Science, Warsaw University of Technology, Warsaw, Poland}
\email{j.grytczuk@mini.pw.edu.pl}
\thanks{The second author was supported in part by Narodowe Centrum Nauki, grant 2020/37/B/ST1/03298.}

\author{Andrzej Ruci\'nski}
\address{Department of Discrete Mathematics, Adam Mickiewicz University, Pozna\'n, Poland}
\email{\tt rucinski@amu.edu.pl}
\thanks{The third author was supported in part by Narodowe Centrum Nauki, grant 2018/29/B/ST1/00426}

\begin{abstract}\emph{Twins} in a finite word are formed by a pair of identical subwords placed at disjoint sets of positions. We investigate the maximum length of twins in \emph{a random} word over a $k$-letter alphabet. The obtained lower bounds for small values of $k$ significantly improve the best estimates known in the deterministic case.

Bukh and Zhou in 2016 showed that every ternary word of length $n$  contains twins of length at least $0.34n$. Our main result states that in a random ternary word of length $n$, with high probability, one can find twins of length at least $0.41n$. In the general case of alphabets of size $k\geq 3$ we obtain analogous lower bounds  of the form $\frac{1.64}{k+1}n$ which are better than the known deterministic bounds for $k\leq 354$. In addition, we present similar results for \emph{multiple} twins in random words. \end{abstract}

\maketitle


\setcounter{footnote}{1}

\section{Introduction}
Looking for twin objects has had a long tradition in all branches of mathematics. For discrete structures a first question of this kind is attributed  to Ulam (see, e.g.,~\cite{ChungEGUY}) who
proposed to measure  similarity of two graphs in terms of their edge decompositions  into pairwise isomorphic subgraphs. If the number of subgraphs in a decomposition is small, then there must be a large pair among them, which leads to the following general question: given a combinatorial structure (or a pair of structures), how large disjoint isomorphic substructures does it (do they) contain?



This problem has been studied in various forms: for graphs (\cite{AlonCaroKrasikov}, \cite{LeeLohSudakov}), words (\cite{APP}, \cite{BukhZ}), and permutations (\cite{BukhR}, \cite{DGR-Electronic}, \cite{DGR-Integers}, \cite{DGR-Annals}). Lots of interesting results and challenging open problems can be found in these papers and their references. Some of them are collected in a survey \cite{Axenovich}.

\subsection{Twins in words: history and background}\label{Section Twins History}

In this paper we study the problem of twins in random words. To put our work in a broader context, we briefly recall what is known on twins in words in general.

The first who introduced and studied this problem were Axenovich, Person, and Puzynina~\cite{APP}. We say that a word $w=w_1w_2\cdots w_n\in A^n$, $|A|<\infty$, contains \emph{twins} of \emph{length $t$} if there exist \emph{disjoint} subsets $\{i_1<i_2<\cdots<i_t\}$ and $\{j_1<j_2<\cdots<j_t\}$ of indices such that $w_{i_1}w_{i_2}\cdots w_{i_t}=w_{j_1}w_{j_2}\cdots w_{j_t}$. Let $t(w)$ denote the maximum length of twins in $w$ and let $t(n,k)$ denote the minimum of $t(w)$ over all words $w$ of length $n$ over a $k$-element alphabet $A$.
They proved in~\cite{APP} that $t(n,2)=n/2-o(n)$. In other words,  every finite binary word can be split into a pair of identical subwords, up to an asymptotically negligible remainder. To get this striking result they developed a regularity lemma for words -- an interesting tool with a great potential for further applications (see, e.g.,~\cite{HanKP-S}).

At present, it is not known whether the same is true for  larger alphabets. By considering the sub-word formed by two most frequent letters, the above mentioned estimate $t(n,2)=n/2-o(n)$  from~\cite{APP} implies that $t(n,k)\geq n/k -o(n)$. For $k\geq3$, this was slightly improved by Bukh and Zhou~\cite{BukhZ} to
\begin{equation}\label{Bukh-Zhou t_k(n)1}
t(n,k)\geq 1.02\cdot\frac{n}{k} -o(n)
\end{equation} and to
\begin{equation}\label{Bukh-Zhou t_k(n)2}
t(n,k)\geq \left(\frac{k}{81}\right)^{1/3}\cdot \frac{n}{k}-(k/3)^{1/3},
\end{equation}
 the latter being a (much) better estimate for larger $k$. In~\cite{APP} and~\cite{BukhZ} there are also some upper bounds on $t(n,k)$ valid for $k\ge4$. In particular, $t(n,4)\leq0.4932n$.

Twins in \emph{random} words were not much studied before, although the proofs of the upper bounds in~\cite{APP} and~\cite{BukhZ} were obtained via the probabilistic method. The only work addressing this issue directly is a recent paper by He, Huang, Nam, and Thaper \cite{HeHNT}, where it is proved that, with high probability, binary words of length~$n$ contain twins of size $n/2-\omega\sqrt{n}$, for any function $\omega=\omega(n)$ tending to infinity with $n$. Moreover, the authors of \cite{HeHNT} formulate a striking conjecture  that almost every binary word with even numbers of ones and zeros is a perfect pair of twins. If true, this would imply that, with high probability, a random binary word of an \emph{even} length $n$ contains twins of size at least $n/2-1$ (when the number of ones is odd, this is the best one can get).


\subsection{Our results}
In this paper we study twins in a \emph{random word} $W_k(n)$ obtained by drawing one with probability $k^{-n}$ out of all $k$-ary words of length $n$. Equivalently, one could toss a $k$-sided fair die, independently, $n$ times. Either way, this is an equiprobable space.

Our main result improves the  lower bound  \eqref{Bukh-Zhou t_k(n)1} for  all but very few ternary ($k=3)$ words of length $n$.

\begin{thm}\label{thm:0.4} With probability $1-e^{-\Omega(n/(\log n))}$,
$$t(W_3(n))\ge 0.411 n.$$
\end{thm}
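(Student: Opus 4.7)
The plan is a block-decomposition argument whose technical core is a finite computer calculation.

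First I would fix an integer $L$ (either a constant or of order $\log n$) and partition $W_3(n)$ into $m:=\lfloor n/L\rfloor$ consecutive disjoint blocks $B_1,\dots,B_m$ of length $L$, discarding a negligible remainder. Because $W_3(n)$ is uniform on $\{1,2,3\}^n$, the blocks are i.i.d.\ copies of $W_3(L)$. In each block $B_i$ I would select a pair of disjoint index sets $(I_i,J_i)\subseteq B_i$ realising the longest twin of $B_i$ and set $T_i:=t(B_i)=|I_i|$. Since the blocks occupy disjoint, increasing intervals of positions, the unions $I:=\bigcup_i I_i$ and $J:=\bigcup_i J_i$ form a valid twin in $W_3(n)$: they are disjoint, and reading $w$ at $I$ in positional order produces the concatenation $w_{I_1}w_{I_2}\cdots w_{I_m}$, which agrees with the analogous reading $w_{J_1}\cdots w_{J_m}$ at $J$ by the within-block twin condition. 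Hence $t(W_3(n))\ge \sum_{i=1}^m T_i$, a sum of i.i.d.\ variables bounded by $L/2$. Assuming $\EE[T_1]\ge(0.411+\eta)L$ for some absolute $\eta>0$, Hoeffding's inequality yields
\begin{equation*}
\Pr\bigl[\,t(W_3(n))<0.411\,n\,\bigr]\le\exp\!\bigl(-\Omega(m)\bigr)=\exp\!\bigl(-\Omega(n/L)\bigr),
\end{equation*}
which matches the claimed tail bound as soon as $L=O(\log n)$.

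Everything therefore reduces to producing a concrete $L$ with $\EE[t(W_3(L))]>0.411\,L$. This is the computer-assisted ingredient: enumerate all $3^L$ ternary words of length $L$ and, for each, compute (or rigorously lower bound) $t(w)$ by a dynamic program whose state is the unmatched \emph{excess} suffix of one of the two subwords together with a sign telling which copy is ahead, transitioning according to whether the next letter of $w$ is placed in copy one, used in copy two to match the first letter of the current excess, or skipped. Averaging the resulting values gives a rational lower bound on $\EE[t(W_3(L))]$ that one then checks exceeds $0.411\,L$. Super-additivity $\EE[t(W_3(L_1+L_2))]\ge \EE[t(W_3(L_1))]+\EE[t(W_3(L_2))]$ (by concatenating block-twins, exactly as in Step~1) forces the ratio $\EE[t(W_3(2^k))]/2^k$ to be monotone non-decreasing in $k$ and to converge to a limit $\tau_3$; since the binary analogue satisfies $\tau_2=1/2$ by the Axenovich--Person--Puzynina result $t(n,2)=n/2-o(n)$, one expects $\tau_3$ to sit comfortably above $0.411$, and only a single concrete $L$ witnessing this is required.

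The principal obstacle is computational: the excess-state space of the dynamic program is exponential in $L/2$, so direct enumeration over all $3^L$ words is only tractable up to roughly $L\approx 20$. If the crossover at which $\EE[t(W_3(L))]/L$ first exceeds $0.411$ sits past that range, one may replace the exact longest twin by a cheaper-to-compute surrogate -- a bounded-excess DP returning some $t_K(w)\le t(w)$, or an ad hoc greedy/structural twin-construction tailored to typical ternary strings -- whose expectation still beats $0.411\,L$ and which is cheap enough to analyse at the required $L$. The block-decomposition and Hoeffding step above are unaffected by any such refinement.
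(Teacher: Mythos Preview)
Your proposal is correct and takes essentially the same approach as the paper: partition into i.i.d.\ blocks of a fixed length, concatenate the longest twin from each block, and apply a concentration inequality (the paper uses Chernoff on the counts $X_t$ rather than Hoeffding on $\sum_i T_i$, but the two are equivalent here). The paper carries out the finite computation at $s=14$ by brute-force enumeration (not a DP) and obtains $\E[t(W_3(14))]/14=\rho_{14}>0.4119$, so your worries about needing $L\approx 20$ or a surrogate lower bound turn out to be unnecessary.
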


The proof involves computer-assisted calculations (see Appendix~\ref{sec:maple}).  It seems plausible that substantially stronger computational devices could bring further improvements of the bound. Moreover, by using classical methods (without computers) we provide a slightly worse bound, namely, $0.375n$ (cf.~Example \ref{375} in Subsection \ref{apple}).

 For larger alphabets we have the following result.  We present it in a form which emphasizes the improvement over the deterministic bound \eqref{Bukh-Zhou t_k(n)1}. We say that an event $\mathcal E_n$ holds \emph{asymptotically almost surely (a.a.s.)} if $\Pr(\mathcal E_n)\to1$ as $n\to\infty$.

 \begin{thm}\label{main1} For each $k\ge3$ and large $n$, a.a.s.
\[
	t(W_k(n))\ge\frac{1.64k}{k+1}\cdot\frac{n}{k},
\]
\end{thm}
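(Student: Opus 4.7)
I would attack Theorem~\ref{main1} by a two-step strategy that parallels the proof of Theorem~\ref{thm:0.4} but replaces its computer-assisted search with an analytic block-wise estimate.

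First, I would reduce the theorem to a deterministic expectation bound. The map $n\mapsto \EE[t(W_k(n))]$ is superadditive: for independent $W'=W_k(m)$ and $W''=W_k(n)$, taking an optimal twin in each and concatenating gives a valid twin of $W'W''$, because every position of $W'$ precedes every position of $W''$ and so the two alignment conditions (ordered lefts and ordered rights) fuse automatically. Fekete's lemma then yields the existence of $c_k:=\lim_{n\to\infty}\EE[t(W_k(n))]/n=\sup_n \EE[t(W_k(n))]/n$. Since $W\mapsto t(W)$ changes by at most $1$ under a single-letter edit, McDiarmid's inequality gives $\Pr(|t(W_k(n))-\EE[t(W_k(n))]|>\eps n)\leq \exp(-\Omega(\eps^2 n))$. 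Together these reduce the theorem to showing $c_k\geq \tfrac{1.64}{k+1}$ for $k\geq 3$.

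To lower-bound $c_k$, it is enough to prove the finite-length inequality $\EE[t(W_k(b))]\geq \tfrac{1.64}{k+1}\cdot b$ for some (possibly $k$-dependent) block length $b$. The natural starting point is the binary reduction behind \eqref{Bukh-Zhou t_k(n)1}: restricting to the subword on two letters yields a uniform random binary word of length $\approx 2b/k$, which by the result of He, Huang, Nam and Thaper (or even by the deterministic bound of Axenovich, Person and Puzynina) contains twins of length $\sim b/k$. This already realizes the $1/k$ term but falls short of $\tfrac{1.64}{k+1}$. The remaining factor is to be obtained by a block-local enrichment that also harvests aligned same-letter pairs from the residual $k-2$ letters and merges them with the binary twin in an alignment-preserving way; concatenating such per-block twins across disjoint ordered blocks is automatic.

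The principal obstacle is producing the explicit constant $1.64$. For $k=3$, this finite-block inequality is exactly what the computer search in Theorem~\ref{thm:0.4} verifies (with the slightly better constant $0.411$ instead of $0.41=1.64/4$). For general $k\geq 3$ one has to replace that search by a uniform analytic estimate, optimizing the block length $b$ against the expected number of aligned pairs extractable from the $k-2$ non-binary letters without breaking the alignment with the binary twin. This combinatorial optimization---which leverages not only letter-frequency concentration but also the joint distribution of nearby positions inside a block---is where I expect the genuine technical work to lie.
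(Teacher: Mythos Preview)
Your reduction via superadditivity, Fekete's lemma, and McDiarmid is correct and tidy, but it only handles the easy concentration step; the content of the theorem is the constant $1.64$, and here your proposal has a genuine gap. You reduce everything to a finite-block inequality $\EE[t(W_k(b))]\ge \tfrac{1.64}{k+1}\,b$, observe that the two-letter reduction already yields $\sim b/k$, and then invoke an unspecified ``block-local enrichment'' to harvest aligned same-letter pairs from the remaining $k-2$ letters. You yourself flag this last step as ``where the genuine technical work lies,'' and you do not carry it out. Nothing in your outline produces the specific constant $1.64$ uniformly in $k$; the residual-letter contributions you describe are lower-order, and no purely analytic block argument available here comes close to $1.64$ without the $k=3$ computer search.

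The paper's argument is structurally different and does not attempt a separate block estimate for each $k$. It takes the computer-assisted bound $t(W_3(n))\ge 0.411n$ of Theorem~\ref{thm:0.4} as the base case and iterates the Boosting Lemma (Lemma~\ref{k2k+1}): passing from a $j$-letter to a $(j{+}1)$-letter random word (in the fixed-letter-count model, with Propositions~\ref{p2M} and~\ref{M2p} handling the model transfers) multiplies the twin-length constant by $\tfrac{(j+1)^2}{(j+1)^2-1}\cdot\tfrac{j}{j+1}$. The telescoping product $\prod_{j=4}^{k}\tfrac{j^2}{j^2-1}=\tfrac{4k}{3(k+1)}$ together with $\prod_{j=3}^{k-1}\tfrac{j}{j+1}=\tfrac{3}{k}$ turns $0.411$ into $\tfrac{4\cdot 0.411}{k+1}=\tfrac{1.644}{k+1}$, and the spare $0.004$ absorbs the accumulated $(1-o(1))$ factors. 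Thus $1.64$ is literally $4\rho_{14}$ rounded down, with $\rho_{14}>0.4119$ coming from the $s=14$ computer search; it is not recoverable by the $k$-uniform analytic route you sketch. Your proposal misses both the Boosting Lemma and this provenance of the constant.
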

 \noindent This gives an improvement upon the (deterministic) estimates (\ref{Bukh-Zhou t_k(n)1}) and, for all $k\leq 354$, upon~(\ref{Bukh-Zhou t_k(n)2}) (see Table~\ref{table:r=2}).

\begin{table}
\begin{tabular}{ |>{\columncolor{codelightgray}}c|c | c | c | c | c | c | c | c |}
\hline
\rowcolor{codelightgray}
$k$ & 3 & 4 & 5 & 10 & 50 & 100 & 200 &  400\\ \hline
$\left(\frac{k}{81}\right)^{1/3}$ & 0.333  & 0.367  & 0.395  & 0.498  &  0.851 & 1.073   &  1.352 &  \textcolor{red}{1.703} \\ \hline
$\frac{1.64k}{k+1}$ & 1.230  & 1.312  & 1.367  & 1.491  &  1.608 & 1.624  & 1.632  &  1.636 \\ \hline
\end{tabular}
\caption{Comparing bound~\eqref{Bukh-Zhou t_k(n)2} of Bukh and Zhou~\cite{BukhZ} (for all words) with  Theorem~\ref{main1} (for almost all words).}
\label{table:r=2}
\end{table}

 The proof uses a tool called the Boosting Lemma (Lemma \ref{k2k+1}). It is stated in terms of a special model of random words which allows for iterative enhancement of the twin length, while adding a new letter to the alphabet. This new model assumes that the numbers of occurrences of letters are fixed in such a way that the model  is asymptotically equivalent to $W_k(n)$.

\subsection{Multiple twins}
We also consider a more general notion of \emph{multiple} twins. By \emph{$r$-twins} in a word $w$ we mean $r$ disjoint identical subwords of $w$. Let $t^{(r)}(w)$ be the maximum length of $r$-twins in $w$ and let $t^{(r)}(n,k)$ be the minimum of $t^{(r)}(w)$ over all words $w$ of length $n$ from a $k$-letter alphabet. By the results of~\cite{APP} and~\cite{BukhZ} we know, respectively, that $t^{(r)}(n,k)\sim n/r$ when $r\geq k$, and
\begin{equation}\label{BZr}
t^{(r)}(n,k)\geq C_r\cdot k^{1/\binom{2r-1}{r}}\cdot\frac{n}{k}-O(1),
\end{equation}
for $k>r\geq3$, where $C_r= \left(\frac{1}{2r-1}\right)^{1+1/\binom{2r-1}{r}}$ and the $O(1)$ term depends on $r$ and $k$ only.
Estimate \eqref{BZr} was only mentioned in the concluding remarks in \cite{BukhZ} and the explicit form of $C_r$ was not given. However, based on the proof of inequality \eqref{Bukh-Zhou t_k(n)2} therein, it is quite obvious how it should be computed.

For a random word $W_k(n)$ we get the following estimates which, again, for small $r$ and $k$  yield better lower bounds than \eqref{BZr} (see Table~\ref{table:r,k}). For $r,k\ge2$, let  $\Pi_{r,k}=\prod_{j=r+1}^k\frac{j^r}{j^r-1}$.

\begin{thm}\label{main2} For every $k>r\ge3$, a.a.s.
\[
	t^{(r)}(W_k(n))\ge\Pi_{r,k}\cdot\frac{n}{k}-o(n).
\]
\end{thm}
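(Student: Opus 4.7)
The proof follows the inductive template of Theorem~\ref{main1}: it runs induction on $k$, starting at $k=r$ (where $\Pi_{r,r}=1$ by the empty-product convention), and at each step invokes a version of the Boosting Lemma adapted to $r$-twins. The base case $t^{(r)}(W_r(n))\ge n/r - o(n)$ is immediate from the deterministic bound $t^{(r)}(n,r)\sim n/r$ of Axenovich--Person--Puzynina and Bukh--Zhou mentioned in Subsection~\ref{Section Twins History}, which applies in particular to the random word $W_r(n)$.

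For the inductive step from $k$ to $k+1$, I would first switch from $W_{k+1}(n)$ to the asymptotically equivalent fixed-letter-count model used in the proof of Theorem~\ref{main1}. By a standard Chernoff-type argument, a.a.s.\ each letter appears $n/(k+1) + O(\sqrt{n\log n})$ times, so one may condition on this event. Conditioning further on the set $M\subset[n]$ of positions of the new letter $k+1$, the complement carries a uniformly random $k$-ary word with fixed letter counts, of length $n-|M|\sim nk/(k+1)$. The inductive hypothesis yields disjoint index sets $I_1,\ldots,I_r\subset[n]\setminus M$ of common length
\[
\ell \;\sim\; \Pi_{r,k}\cdot\frac{n-|M|}{k} \;\sim\; \Pi_{r,k}\cdot\frac{n}{k+1}
\]
supporting identical subwords.

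The boost is then obtained by synchronously re-inserting elements of $M$: for each gap index $s\in\{0,1,\ldots,\ell\}$ of the twins, set $a_{j,s}$ to be the number of $M$-positions lying in the $s$-th gap of $I_j$, and let $b_s:=\min_{1\le j\le r} a_{j,s}$. Picking any $b_s$ elements of $M$ from each twin's $s$-th gap and unioning them into $I_j$ produces disjoint enlarged sets whose associated subwords remain pairwise identical, now of common length $\ell+\sum_s b_s$. Closing the induction reduces to the a.a.s.\ estimate
\[
\sum_{s=0}^{\ell} \min_{1\le j\le r} a_{j,s} \;\ge\; \frac{\ell}{(k+1)^r-1} - o(n),
\]
which multiplies $\ell$ by $(k+1)^r/((k+1)^r-1) = \Pi_{r,k+1}/\Pi_{r,k}$ and so delivers the target length $\Pi_{r,k+1}\cdot n/(k+1) - o(n)$.

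The main obstacle is proving this last inequality, which is the $r$-fold analogue of Lemma~\ref{k2k+1}. For $r=2$ the two gap-index vectors admit a direct combinatorial comparison, but for $r\ge 3$ the functions $x\mapsto s_j(x)$ giving the gap index of $x\in M$ in $I_j$ are strongly correlated across $j$, so controlling $\min_j a_{j,s}$ requires more care. The plan is to show that the typical twin tuple $(I_1,\ldots,I_r)$ is balanced in the sense that, outside of $o(\ell)$ exceptional gaps, each $a_{j,s}$ is close to the common mean $|M|/(\ell+1)$, so that $\min_j a_{j,s}$ loses only a lower-order term relative to the average; summing across gaps then yields a near-$|M|$ contribution, comfortably larger than the target $\ell/((k+1)^r-1)$. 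A rigorous implementation will likely extend the combinatorial argument behind the $r=2$ Boosting Lemma -- perhaps by first regularizing the gap sizes of the $I_j$'s and then conditioning the positions of $M$ on the resulting gap partition -- rather than rely on a pure second-moment computation, precisely because the gap structures of distinct $I_j$'s are far from independent.
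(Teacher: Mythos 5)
Your overall architecture is exactly the paper's: the proof of Theorem~\ref{main2} is the proof of Theorem~\ref{main1} with $k'=r$, base case $t^{(r)}=n/r-o(n)$ from Lemma~\ref{interwining} (or its deterministic counterpart), model switching via Propositions~\ref{p2M} and~\ref{M2p}, and iterated boosting with the telescoping factor $\bigl(1+\tfrac{1}{j^{r}-1}\bigr)\tfrac{j-1}{j}$. However, the step you single out as ``the main obstacle'' --- the $r$-fold boosting inequality --- is not an obstacle at all: Lemma~\ref{k2k+1} is stated and proved for every $2\le r\le k$, not just $r=2$. Its proof handles the cross-twin correlations you worry about \emph{exactly}, by computing $\PP(I_\ell=1)$ and the pair probabilities $\PP(I_{\ell_1}=I_{\ell_2}=1)$ as ratios of binomial coefficients in the balls-into-bins picture (via Fact~\ref{fact:ratio}) and then applying Chebyshev; that is, precisely the ``pure second-moment computation'' you dismiss at the end is the rigorous argument, uniformly in $r$.

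The substitute plan you sketch would not work, for two concrete reasons. First, the gaps of a twin of length $\ell=\Theta(n)$ have constant average length $\approx 1/\lambda$, so the counts $a_{j,s}$ are random variables of constant mean and constant-order fluctuation; there is no regime in which ``each $a_{j,s}$ is close to the common mean outside $o(\ell)$ exceptional gaps.'' Consequently $\min_{j}a_{j,s}$ is below the mean by a constant \emph{factor} depending on $r$ and $k$, not by a lower-order term, and $\sum_s\min_j a_{j,s}$ is a constant fraction of $|M|$, nowhere ``near-$|M|$'' (the paper's single-bin version of this sum has expectation exactly $\lambda n/((k+1)^r-1)$, which for, say, $r=3$, $k=3$ is $|M|/O(1)$ with a large constant). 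Second, using entire gaps destroys disjointness: the $s$-th gap of $I_1$ and the $s$-th gap of $I_2$ overlap as intervals of $[n]$, so the same inserted letter can be claimed by two twins, and ``picking any $b_s$ elements from each twin's $s$-th gap'' does not yield disjoint index sets without an additional matching argument. The paper avoids both problems by restricting attention to the single bin immediately to the right of each twin element; these $r\lambda n$ bins are pairwise distinct because the twins occupy disjoint positions, so the inserted letters are automatically allocated disjointly, and the exact hypergeometric computation replaces any appeal to concentration of individual gap counts.
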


\begin{table}
\begin{tabular}{ |>{\columncolor{codelightgray}}c|c | c | c | c | c | c | c | c |}
\hline
\rowcolor{codelightgray}
$(r,k)$ & $(3,4)$ & $(3,10)$ & $(3,100)$ & $(3,1000)$ & $(3,10^{10})$ & $(4,10^{10})$  & $(4,10^{40})$ \\ \hline
$C_r\cdot k^{1/\binom{2r-1}{r}}$ & 0.196 & 0.214 & 1.036 & 0.340 & \textcolor{red}{1.703} & 0.261 & \textcolor{red}{1.878}\\ \hline
$\Pi_{r,k}$ & 1.016 & 1.036 & 1.041 & 1.041 & 1.041 & 1.003 & 1.003\\ \hline
\end{tabular}
\caption{Comparing bound~\eqref{BZr} of Bukh and Zhou~\cite{BukhZ} (for all words) with Theorem~\ref{main2} (for almost all words).}
\label{table:r,k}
\end{table}

\subsection{Organization} In the next section we prove Theorem \ref{thm:0.4}. Section \ref{genbou} begins with a short proof of the known bound $t^{(r)}(n,k)\sim n/r$, $r\ge k$, for random words. We do so for self-containment, as the proof in \cite{APP} (for \emph{all} words) is quite involved. The next subsection contains a standard proof of an asymptotic equivalence between $W_k(n)$ and another model of random words. Then comes the crucial Boosting Lemma, while the last subsection of Section \ref{genbou} brings applications of the Boosting Lemma, among them short proofs of Theorems \ref{main1} and \ref{main2}, the former utilizing also Theorem \ref{thm:0.4}.
The last section contains some remarks and open problems, while the Appendix presents a Maple code used for derivation of the data collected in Table~\ref{table:lambda}, as well as a proof of a technical estimate needed in Section \ref{boost}.

\section{Computer assisted bound}
In this section we prove Theorem \ref{thm:0.4}.

\begin{proof}[Proof of Theorem \ref{thm:0.4}]
Fix a positive integer $s$ and split the set of positions of  a ternary random word $W_3(n)$ into $m:=n/s$ segments of length~$s$ (we assume $s|n$).
For $1\le j\le m$ and $1\le t \le \lfloor \frac{s}{2} \rfloor$, let $X^j_t$ be the indicator random variable such that $X^j_t=1$ if the $j$-th segment of a random word $W_3(n)$ contains twins of length $t$ but not $t+1$; otherwise $X^j_t=0$. Furthermore, define $X_t:=\sum_{j=1}^m X^j_t$.

First we calculate the expected value of $X_t$. Let $\lambda_t$ count the number of ternary words of length $s$ with twins of length $t$ but not $t+1$. Clearly, $\sum_{t=1}^{\lfloor s/2 \rfloor} \lambda_t = 3^s$. In general, finding (or even tightly approximating) $\lambda_t$ does not seem to be an easy problem. However, for small  $s$ one can use a computer program to determine $\lambda_s$. In Table~\ref{table:lambda}  we present all values of $\lambda_t$ for all $6\le s\le 14$ and $1\le t\le\lfloor s/2\rfloor$ (see Appendix~\ref{sec:maple} for more details). Since $X_t$ has the binomial distribution $\bin(m,\lambda_t/3^s)$, its expectation is
\[
\E(X_t) = \frac{\lambda_t}{3^s} m = \frac{\lambda_t}{s3^s} n.
\]
Moreover, a standard application of the Chernoff inequality (see, e.g., ineq. (2.9) in~\cite{JLR}) together with the union bound yields that for all $1\le j\le m$ and $1\le t\le \lfloor \frac{s}{2} \rfloor$, $X_t$ is highly concentrated around its mean, i.e.,
\[
X_t = \frac{\lambda_t}{s3^s}(1+o(1)) n
\]
with probability $1-e^{-\Omega(n/(\log n))}$.

To finish the proof of Theorem~\ref{thm:0.4} we construct twins in $W_3(n)$ as follows. Let, for each $1\le j\le m$,  $A_j$ and $B_j$ be a pair of the longest twins in the $j$-th segment. Observe that concatenating $A_j$ and $B_j$ over all $m$ segments yields, a.a.s., twins $A_1A_2\dots A_m$ and $B_1B_2\dots B_m$ of length
$$\sum_{t=1}^{\lfloor s/2 \rfloor} tX_t=\sum_{t=1}^{\lfloor s/2 \rfloor}\frac{t\lambda_t}{s3^s}(1+o(1))n=\rho_s(1+o(1))n,$$
 where
\[
\rho_s:=\sum_{t=1}^{\lfloor s/2 \rfloor} \frac{t\lambda_t}{s3^s}.
\]
From the last column of Table~\ref{table:lambda} we compute that
$$\rho_{14}=\frac{4\cdot24\:\!894+5\cdot1\:\!312\:\!530+6\cdot 3\:\!196\:\!644+7\cdot248\:\!901}{14\cdot 3^{14}}=\frac{27\:\!584\:\!397}{66\:\!961\:\!566}>0.4119.$$
This completes the proof. \end{proof}


\begin{table}
\begin{tabular}{ |>{\columncolor{codelightgray}}c|c c c c c c c c c |} 
 \hline
\rowcolor{codelightgray}
$s$          & 6     & 7       & 8      & 9          & 10       & 11        & 12         & 13         & 14\\
\hline
$\lambda_1$ & 42   & 6     & \multicolumn{3}{|c}{} & \multicolumn{4}{c|}{\multirow{2}{*}{\textsf{Zero}}} \\ \cline{4-6}
$\lambda_2$ & 594 & 1\:\!086 & 822  & 288      & \multicolumn{1}{c|}{42} & \multicolumn{4}{c|}{}             \\ \cline{7-9}
$\lambda_3$ & 93   & 1\:\!095 & 5\:\!118 & 11\:\!010  & 10\:\!806 & 5\:\!292    & 1\:\!350     & 162 & \multicolumn{1}{|c|}{} \\ \cline{2-3} \cline{10-10}
$\lambda_4$ &  \multicolumn{2}{c|}{}                & 621   & 8\:\!385    & 43\:\!776 & 106\:\!032 & 123\:\!750 & 75\:\!810 & 24\:\!894\\ \cline{4-5}
$\lambda_5$ &   \multicolumn{4}{c|}{\multirow{2}{*}{}}          & 4\:\!425   & 65\:\!823   & 373\:\!638 & 992\:\!244 & 1\:\!312\:\!530\\ \cline{6-7}
$\lambda_6$ &   \multicolumn{6}{c|}{\textsf{Zero}}            & 32\:\!703   & 526\:\!107 & 3\:\!196\:\!644\\ \cline{8-9}
$\lambda_7$ & \multicolumn{8}{c|}{} & 248\:\!901 \\
 \hline
\end{tabular}
\caption{The exact values of $\lambda_t$ for all $6\le s\le 14$ and $1\le t \le \lfloor s/2 \rfloor$.}
\label{table:lambda}
\end{table}

\section{General bounds}\label{genbou}
\subsection{More twins than letters}\label{ratleastk} It was shown in~\cite{APP} that   for all $r\ge k$
$$t^{(r)}(n,k)\ge \frac nr-O\left(\left(\frac{\log\log n}{\log n}\right)^{1/4}\right).$$
For self-containment we insignificantly improve this lower bound for \emph{almost} all words. The new bound holds with probability so close to 1 that a passage to the  alternative model employed in Subsection \ref{apple} is still possible.

To this end, let us recall the main observation in~\cite{APP}: if there is a partition of a $k$-word~$w$ of length $n$ into $m$ segments $w_1w_2\cdots w_m$ of sizes $N:=n/m$ (we assume $m|n$) and in each segment each letter occurs at least $\mu$ times, then one can construct $r$-twins in $w$  via a natural interlacing. Namely,  Twin 1 consists of $\mu$ elements $a_1$ of $w_1$, $\mu$ elements $a_2$ of $w_2, \dots, \mu$ elements $a_k$ of $w_k$, followed by $\mu$ elements $a_{1}$ of $w_{r+1}$, $\mu$ elements $a_2$ of $w_{r+2}$, and so on, for as long as there are still at least $r-1$ segments ahead. Twin 2 follows the same pattern except that it begins with $\mu$ elements $a_1$ of $w_2$, that is, it is shifted by one segment with respect to Twin~1. Then Twin 3 begins with $\mu$ elements $a_1$ of $w_3$, etc.

 This way all $r$ twins are disjoint and they use together $k\mu$ elements from each segment except the first $k-1$ ones where the consumption gradually grows from $\mu$ to $(k-1)\mu$,   and, in the worst case, when $m=r-1\ (\!\!\!\mod{r})$, the last $r-1$ ones, where the consumption declines from $(k-1)\mu$ to $\mu$, leaving the last $r-k$ segments completely untouched. The exact worst case count yields that
  together  twins $T_1,\dots, T_r$ cover at least
\begin{equation}\label{bound}
\left[m-(k-1)-(r-1)\right]k\mu+2\binom k2\mu=(m-r+1)k\mu
 \end{equation}
 elements of $w$.

The R-H-S of \eqref{bound} is, given $m\to\infty$ as $n\to\infty$, very close to $mk\mu$, which, in turn, will be close to $n$, provided
 $\mu\sim N/k$. To get these relations, the authors of~\cite{APP} developed a regularity lemma for words. Here we are in a more comfortable situation as we are dealing with random words. Therefore, a simple application of Chernoff's bound yields the following result.

\begin{lemma}\label{interwining}
For $r\ge k\geq2$, with probability at least $1-O(n^{-k+1/3})$,
$$t^{(r)}(W_k(n))=n/r-O\left( n^{2/3}\sqrt{\log n}\right).$$
\end{lemma}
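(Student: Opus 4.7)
The plan is to combine the deterministic interlacing construction already sketched in the preceding paragraph with a straightforward Chernoff bound on the letter frequencies inside short blocks of the random word. The upper bound $t^{(r)}(W_k(n))\le n/r$ is of course trivial, since $r$ pairwise disjoint subwords can cover at most $n$ positions in total, so the whole content is to produce a matching lower bound that holds off a set of probability $O(n^{-k+1/3})$.

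First I would choose the block scale. Split the positions of $W_k(n)$ into $m$ consecutive segments of common length $N=n/m$, where $N$ will be chosen of order $n^{2/3}$ (so $m=\Theta(n^{1/3})$). For a fixed segment and a fixed letter $a\in A$, the number of occurrences of $a$ is distributed as $\bin(N,1/k)$, hence by Chernoff's inequality (e.g.\ (2.9) in~\cite{JLR}) it deviates from $N/k$ by more than $t$ with probability at most $2\exp(-ct^2/N)$ for an absolute constant $c>0$. Taking $t=C\sqrt{N\log n}$ with $C=C(k)$ sufficiently large and applying the union bound over the $mk$ (segment, letter) pairs shows that, with probability at least $1-O(n^{-k+1/3})$, each of the $k$ letters appears in every segment at least
\[
\mu:=\frac{N}{k}-C\sqrt{N\log n}
\]
times. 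Call this event $\mathcal{E}$.

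Conditioning on $\mathcal{E}$, I invoke the interlacing construction recalled just before the lemma: it yields $r$ pairwise disjoint identical subwords whose total length is at least $(m-r+1)k\mu$, and by the symmetric shift-by-one structure every individual twin has length at least
\[
\left\lfloor\tfrac{m-r+1}{r}\right\rfloor k\mu\;\ge\;\frac{(m-r+1)k\mu}{r}-k\mu.
\]
Substituting $k\mu=N-kC\sqrt{N\log n}$ and $mN=n$, a short algebraic manipulation gives
\[
\frac{(m-r+1)k\mu}{r}-k\mu \;\ge\; \frac{n}{r}-O(N)-O\!\left(\frac{n\sqrt{\log n}}{\sqrt{N}}\right),
\]
the hidden constants depending only on $r$ and $k$. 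The choice $N=\lceil n^{2/3}\rceil$ balances the two error terms at $O(n^{2/3}\sqrt{\log n})$, which is precisely the error claimed in the lemma.

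There is no real obstacle here beyond bookkeeping: the main technical point is the tension between the block length $N$ (which one wants large so that Chernoff gives the required $n^{-k+1/3}$ failure probability after union-bounding over $mk$ events) and the waste $O(N)$ arising from the boundary segments in the interlacing construction. The choice $N=\Theta(n^{2/3})$ is dictated by this trade-off; with $C=C(k)$ chosen large enough to absorb the factor $k$ in the exponent of Chernoff's inequality, both requirements are met simultaneously and the lemma follows. Divisibility issues ($s\mid n$, integrality of $\mu$, etc.) are dealt with by the standard device of rounding and absorbing the $O(1)$ losses into the already present error $O(n^{2/3}\sqrt{\log n})$.
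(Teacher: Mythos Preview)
Your proposal is correct and follows essentially the same route as the paper: partition into $m=\Theta(n^{1/3})$ blocks of length $N=\Theta(n^{2/3})$, apply Chernoff plus a union bound over the $km$ (letter, block) pairs to guarantee at least $\mu=N/k-O(\sqrt{N\log n})$ occurrences of each letter per block with failure probability $O(n^{-k+1/3})$, and then feed this into the interlacing construction via~\eqref{bound}. The only cosmetic differences are that you present $N=n^{2/3}$ as the balance point of the two error terms $O(N)$ and $O(n\sqrt{\log n}/\sqrt N)$ (the paper simply declares $m=n^{1/3}$), and that your Chernoff constant $c$ is not truly ``absolute'' but depends on $k$ through $\E X=N/k$---a harmless slip since you later absorb it into $C=C(k)$.
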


\proof Split the set of positions of $W_k(n)$ into $m:=n^{1/3}$ consecutive segments of length $N:=n^{2/3}$. For $j=1,\dots,m$ and $i=1,\dots,k$, let $X:=X_i^j$ be the number of elements $a_i$ in the $j$-th segment. Then $\E X=N/k$ and, by Chernoff's bound from~\cite{JLR}, ineq. (2.6), we have
\[
\PP(X\le(1-\eps)\E X)\le n^{-k},
\]
where $\eps=kn^{-1/3}\sqrt{2\log n}$.
As we only have $km=O(n^{1/3})$ random variables $X_i^j$, with probability at least $1-O(n^{-k+1/3})$, they all satisfy  the opposite bound, that is, for all $j=1,\dots,m$ and $i=1,\dots,k$,
$$X_i^j\ge \mu:=(1-\eps)n^{2/3}/k=n^{2/3}/k -O(n^{1/3}\sqrt{\log n}).$$
Upon applying \eqref{bound} with this $\mu$, we conclude that the number of elements uncovered by the $r$-twins is, indeed, at most
$n - (m-r+1)k\mu = O(n^{2/3}\sqrt{\log n})$.
\qed

\medskip

Notice that for $r=k=2$, the estimate for $t(W_2(n))$ in the lemma is slightly weaker than the one from \cite{HeHNT} mentioned in the introduction. However, this fact does not affect our results, as all we really need in the proofs (see Example \ref{375} and the proof of Theorem \ref{main2}) is an estimate $t^{(r)}(W_k(n))=n/r-o(1)$. Besides, it is not clear how to generalize the result from \cite{HeHNT} to other values of $k$ and $r$.

\subsection{Equivalence of models of random words}\label{equiv}
Guided by analogy to random graphs, we consider  two basic models of random words: the binomial and the fixed-letter-count model. Given positive integers $k$ and $n$, an alphabet $A=\{a_1,\dots,a_k\}$, and constants $0\le p_1,\dots,p_k\le 1$, where $p_1+\cdots+ p_k=1$, the \emph{binomial random word}
$W(n;p_1,\dots,p_k)$ is a sequence of independent random variables $(X_1,\dots,X_n)$, where for each $j=1,\dots,n$ and $i=1,\dots,k$ we have $\PP(X_j=a_i) = p_i$.
Here we are exclusively interested in the special, equiprobable instance $W_k(n):=W(n;1/k,\dots,1/k)$.

As a technical tool rather than an object of genuine interest we also define another model of a random word. Given integers $0\le M_1,\dots, M_k\le n$, where $M_1+\dots +M_k=n$, \emph{the fixed-letter-count random word} $W(n;M_1,\dots,M_k)$ is obtained by taking uniformly at random a permutation (with repetitions) of $n$ elements, among which, $i=1,\dots,k$, there are $M_i$ elements $a_i$. Thus, in the latter model, we restrict ourselves to words with prescribed numbers of each letter and every such word has the same probability $\binom n{M_1,\dots,M_k}^{-1}$ to be chosen.

The asymptotic equivalence between the two models goes smoothly one way (from fixed-letter-count to binomial), but to proceed the other way  the models lack monotonicity, so we are forced to recourse to an analog of Pittel's inequality (see, e.g., ineq.~(1.6) in~\cite{JLR}) which bring, however, some limitations.

Let $Q$ be a property (subset) of words of length $n$ over alphabet~$A$. Although more general statements can be easily proved, we restrict ourselves to the special case of the  model $W_k(n)$ and also to limiting probabilities equal to 1 only. The proofs follow those for random graphs (cf. Section 1.4 in~\cite{JLR}, in particular, proofs of Prop. 1.12 and of Pittel's inequality (1.6) therein) and rely on the law of total probability and the fact that the space of $W_k(n)$ conditioned on $M_i$'s being the numbers of occurrences of the elements $a_i\in A$, $i=1,\dots,k$, coincides with that of
$W(n;M_1,\dots,M_k)$.

\begin{prop}\label{M2p}
If for all $M_1,\dots,M_k$ such that $M_1+\cdots+M_k=n$ and $M_i=n/k+O(\sqrt n)$, $i=1,\dots,k$, $\PP(W(n;M_1,\dots,M_k)\in Q)\to1$ as $n\to\infty$, then $\PP(W_k(n)\in Q)\to1$ as $n\to\infty$.
\end{prop}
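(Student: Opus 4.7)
The plan is to apply the law of total probability by conditioning on the random vector $\mathbf{M}=(M_1,\dots,M_k)$ counting the occurrences of each letter in $W_k(n)$. The essential input, already recorded in the paragraph preceding the statement, is that conditional on $\mathbf{M}=(m_1,\dots,m_k)$ the word $W_k(n)$ is uniformly distributed among all $k$-ary words with exactly $m_i$ copies of $a_i$ for each $i$, so its conditional law coincides with that of $W(n;m_1,\dots,m_k)$.

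First I would establish concentration of $\mathbf{M}$. Each marginal $M_i$ has distribution $\bin(n,1/k)$, so a standard Chernoff bound yields $\PP(|M_i-n/k|>C\sqrt{n})\le 2e^{-\Omega(C^2)}$ for every fixed $C>0$, and together with a union bound this produces a slowly growing function $\omega(n)\to\infty$ for which the ``typical counts'' event $\cB_n := \{|M_i-n/k|\le \omega(n)\sqrt{n}\ \text{for all } i\}$ satisfies $\PP(\cB_n)\to 1$. Next, the hypothesis translates into the assertion that for every fixed $C>0$,
\[
\varepsilon_n(C) := 1 - \min_{\mathbf{m}:\,|m_i-n/k|\le C\sqrt{n}\ \text{for all } i}\PP(W(n;\mathbf{m})\in Q)\ \lra\ 0
\]
as $n\to\infty$; otherwise one could extract a sequence of count vectors violating the hypothesis. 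A standard diagonal argument then produces some $\omega(n)\to\infty$ with $\varepsilon_n(\omega(n))\to 0$, and by refining if necessary we may assume the same $\omega$ works for both purposes.

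Putting these ingredients together via the law of total probability gives
\[
\PP(W_k(n)\in Q)\ \ge\ \sum_{\mathbf{m}\in \cB_n}\PP(\mathbf{M}=\mathbf{m})\,\PP(W(n;\mathbf{m})\in Q)\ \ge\ (1-\varepsilon_n(\omega(n)))\,\PP(\cB_n)\ \lra\ 1,
\]
which is the desired conclusion. The only non-routine point is the diagonal passage above: the hypothesis only guarantees success inside windows of any fixed width $C\sqrt{n}$, whereas the multinomial fluctuations of $\mathbf{M}$ are of Gaussian order $\sqrt{n}$ and are not captured by any single $C$. Reconciling these two scales through a slowly growing $\omega(n)$ is the main thing to get right; everything else is routine conditional-probability bookkeeping.
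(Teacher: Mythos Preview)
Your proof is correct and follows essentially the same route as the paper: condition on the letter-count vector, restrict to a ``typical'' window of width $\Theta(\sqrt{n})$ around $n/k$, and combine the hypothesis inside the window with concentration of the multinomial counts. The only cosmetic difference is that the paper avoids your diagonal argument by fixing a constant $C$, deducing $\liminf_n \PP(W_k(n)\in Q)\ge 1-C^{-2}$ via Chebyshev, and then letting $C\to\infty$; this is a bit shorter but yields the same conclusion.
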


\proof Let $C$ be a large constant and define (for each $n$)
$${\mathcal M}_C=\{(M_1,\dots,M_k): M_1+\cdots+M_k=n\quad\mbox{and}\quad|M_i-n/k|\le C\sqrt n\}.$$
Let $(M^*_1,\dots,M^*_k)$ minimize $\PP(W(n;M_1,\dots,M_k)\in Q)$ over ${\mathcal M}_C$.
Finally, let $X_i$ be the number of occurrences of letter $a_i$ in $W_k(n)$. Note that each $X_i$ has binomial distribution with expectation $n/k$ and variance less than $n/k$.
Then, by the law of total probability
$$\PP(W_k(n)\in Q)\ge\PP(W(n;M^*_1,\dots,M^*_k)\in Q)\PP((X_1,\dots,X_k)\in{\mathcal M}_C).$$
By assumption, $\PP(W(n;M^*_1,\dots,M^*_k)\in Q)\to1$ as $n\to\infty$. By Chebyshev's inequality applied together with the union bound,
$$\PP((X_1,\dots,X_k)\not\in{\mathcal M}_C)\le k\frac{n/k}{C^2n}=C^{-2}.$$
Thus, $\liminf_{n\to\infty}\PP(W_k(n)\in Q)\ge1-C^{-2}$. As this is true for every $C$, letting $C\to\infty$ yields $\lim_{n\to\infty}\PP(W_k(n)\in Q)=1$. \qed

\begin{prop}\label{p2M}
If  $\PP(W_k(n)\in Q)=1-o(n^{-k/2})$ as $n\to\infty$, then for all $M_i=n/k+\omega_i$, where $|\omega_i|\le\sqrt{(n\log n)/(3k^2)}$ and $\sum_i\omega_i=0$, $\PP(W(n;M_1,\dots,M_k)\in Q)\to1$ as $n\to\infty$.
\end{prop}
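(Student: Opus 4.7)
The strategy is to adapt the classical argument underlying Pittel's inequality (inequality~(1.6) in~\cite{JLR}) to the multinomial setting. Write $X_i$ for the number of occurrences of letter $a_i$ in $W_k(n)$, so that $(X_1,\ldots,X_k)$ follows a multinomial distribution with parameters $n$ and $(1/k,\ldots,1/k)$. The pivotal observation, already used in the proof of Proposition~\ref{M2p}, is that the conditional law of $W_k(n)$ given $X_i=M_i$ for all $i$ coincides with that of $W(n;M_1,\ldots,M_k)$. The law of total probability then immediately yields
\[
	\PP(W_k(n)\notin Q)\ge \PP(W(n;M_1,\ldots,M_k)\notin Q)\cdot\PP(X_i=M_i,\;i=1,\ldots,k),
\]
which, after rearrangement, produces the Pittel-type bound
\[
	\PP(W(n;M_1,\ldots,M_k)\notin Q)\le\frac{\PP(W_k(n)\notin Q)}{\PP(X_i=M_i,\;i=1,\ldots,k)}.
\]

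The heart of the proof is a sufficiently strong lower bound on the denominator. Writing $M_i=n/k+\omega_i$ and applying Stirling's formula to the multinomial coefficient in
\[
	\PP(X_i=M_i,\;i=1,\ldots,k) = \binom{n}{M_1,\ldots,M_k}k^{-n},
\]
a routine expansion (using the constraint $\sum_i\omega_i=0$ to kill the linear term) gives the local-central-limit approximation
\[
	\PP(X_i=M_i,\;i=1,\ldots,k) = (1+o(1))\,\frac{\sqrt{k}}{(2\pi n/k)^{(k-1)/2}}\exp\!\left(-\frac{k}{2n}\sum_{i=1}^k\omega_i^2\right),
\]
uniformly over the range $|\omega_i|=O(\sqrt{n\log n})$, which is well inside the regime where the cubic Stirling remainder is $o(1)$. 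The hypothesis $|\omega_i|\le\sqrt{(n\log n)/(3k^2)}$ forces $\sum_i\omega_i^2\le (n\log n)/(3k)$, so the exponent is at least $-(\log n)/6$, and we conclude that, for some constant $c=c(k)>0$ and all large $n$,
\[
	\PP(X_i=M_i,\;i=1,\ldots,k)\ge c\,n^{-(k-1)/2-1/6}=c\,n^{-(3k-2)/6},
\]
uniformly over all admissible $(M_1,\ldots,M_k)$.

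Plugging the hypothesis $\PP(W_k(n)\notin Q)=o(n^{-k/2})$ into the Pittel-type bound then yields
\[
	\PP(W(n;M_1,\ldots,M_k)\notin Q)\le c^{-1}n^{(3k-2)/6}\cdot o(n^{-k/2})=o(n^{-1/3})=o(1),
\]
completing the proof. The only genuine obstacle here is carrying out the Stirling expansion cleanly enough to identify the quadratic form in the $\omega_i$'s and to verify that the cubic remainder is negligible throughout the admissible range; this is exactly where the logarithmic slack built into the bound $|\omega_i|\le\sqrt{(n\log n)/(3k^2)}$ does its work, being tight enough to absorb the Gaussian penalty into the harmless factor $n^{-1/6}$ while still comfortably covering all typical letter-count vectors.
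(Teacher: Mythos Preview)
Your argument is correct and follows essentially the same route as the paper: both derive the Pittel-type inequality
\[
\PP(W(n;M_1,\dots,M_k)\notin Q)\le\frac{k^n}{\binom{n}{M_1,\dots,M_k}}\,\PP(W_k(n)\notin Q)
\]
and then bound the multinomial point-probability via Stirling. The only difference is cosmetic: you carry out the Stirling expansion to second order, obtaining the local-CLT form with exponent $-\tfrac{k}{2n}\sum_i\omega_i^2\ge-\tfrac16\log n$, whereas the paper uses the cruder one-line bound $(1+k\omega_i/n)^{M_i}\le\exp(k\omega_iM_i/n)$, which after summing and using $\sum_i\omega_i=0$ gives exponent $\tfrac{k}{n}\sum_i\omega_i^2\le\tfrac13\log n$. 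Your sharper constant yields $o(n^{-1/3})$ instead of the paper's $o(n^{-1/6})$, but either suffices.
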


\proof Fix $M_1,\dots,M_k$ as in the statement of the proposition. By the law of total probability, we obtain
\begin{align*}
\PP(W_k(n)\not\in Q)&=\sum_{M'_1,\dots,M'_k}\PP(W(n;M'_1,\dots,M'_k)\not\in Q)\binom n{M_1',\dots,M'_k}\frac1{k^n}\\&\ge \PP(W(n;M_1,\dots,M_k)\not\in Q)\binom n{M_1,\dots,M_k}\frac1{k^n},
\end{align*}
from which we get
$$\PP(W(n;M_1,\dots,M_k)\not\in Q)\le\frac{k^n}{\binom n{M_1,\dots, M_k}}\PP(W_k(n)\not\in Q).$$
It remains to estimate the ratio $\frac{k^n}{\binom n{M_1,\dots, M_k}}$. Using Stirling's formula several times, we get
\[
\frac{k^n}{\binom n{M_1,\dots, M_k}}=O( n^{(k-1)/2})\prod_{i=1}^k(1+k\omega_i/n)^{M_i}\\
\le O( n^{(k-1)/2})\exp\left\{\sum_{i=1}^{k}k\omega_iM_i/n\right\}.
\]
Now since $M_i=n/k+\omega_i$ and $\sum_i\omega_i=0$, we get
\[
\sum_{i=1}^{k}k\omega_iM_i/n
= \sum_{i=1}^{k}k\omega_i\left(\frac{1}{k}+\frac{\omega_i}{n}\right)
= \sum_{i=1}^{k}\frac{k\omega_i^2}{n}
\le \frac{\log n}{3}
\]
and so ${k^n}/{\binom n{M_1,\dots, M_k}} = O( n^{k/2 - 1/6})$,
which yields that $\PP(W_k(n)\not\in Q)=o(1)$. \qed

\subsection{Boosting Lemma}\label{boost}
Fix $2\le r\le k$ and observe that if for some $\lambda=\lambda(n)>0$ and all $n\ge n_0$, we have $t^{(r)}(n,k)\ge\lambda n$, then also, by dropping the least frequent letter, $t^{(r)}(N,k+1)\ge\frac{\lambda k}{k+1}N$, provided $N\ge\tfrac{k+1}k n_0$.
In this section we show that for random words this trivial bound can be improved if one considers the fixed-letter-count model.

 To get a similar result for the binomial model $W_k(n)$ one has to switch first to the fixed-letter-count model $W(n;M_1,\dots,M_{k})$ and then back to $W_{k+1}(N)$, the switches facilitated, respectively, by Propositions \ref{p2M} and \ref{M2p}.
The reason for switching  is that the fixed-letter-count model can be broken into two phases allowing the enlargement of the twins.

Indeed, let $M_1,\dots,M_{k+1}$ be given such that $\sum_{i=1}^{k+1}M_i=N$. We generate the random word $W(N;M_1,\dots,M_{k+1})$ by first permuting all $n:=M_1+\cdots+M_k$ letters from $A\setminus\{a_{k+1}\}$ (Phase 1). This can be done in precisely $\binom{n}{M_1,\dots,M_k}$ ways.
Then we throw in the $M_{k+1}$ letters $a_{k+1}$ which can go anywhere between the previously  distributed letters (Phase 2). This can be done, by the formula for the number of ways to allocate $M_{k+1}$ balls into $n+1$ bins, in $\binom{N}{M_{k+1}}$ ways. Note that the product of these two numbers is, indeed, $\binom N{M_1,\dots,M_{k+1}}$ and that the outcome of Phase 1 is precisely the random word $W(n;M_1,\dots,M_k)$.


\begin{lemma}[Boosting Lemma]\label{k2k+1}
For $2\le r\le k$ and $n$ sufficiently large, let a partition $n=M_1+\cdots+M_k$ into nonnegative integers be given. If $M_{k+1}=n/k+O(\sqrt n)$ and, for some $\lambda=\lambda(n)>0$,  a.a.s.~$t^{(r)}(W(n;M_1,\dots,M_k))\ge\lambda n$, then a.a.s.
$$t^{(r)}(W(N;M_1,\dots,M_k,M_{k+1}))\ge\left(1+\frac1{(k+1)^r-1}\right)\frac{\lambda k}{k+1}N(1-o(1)),$$
where $N=n+M_{k+1}$.
\end{lemma}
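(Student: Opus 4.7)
The strategy is to condition on Phase 1 producing long $r$-twins and then bound the additional twin length obtainable from the random $a_{k+1}$'s scattered in Phase 2. By hypothesis, a.a.s.\ the Phase 1 word $w=W(n;M_1,\dots,M_k)$ admits $r$-twins $T_1,\dots,T_r$ of length $\ell:=\lambda n$; fix such twins and condition on this event. Since $N=n(k+1)/k+O(\sqrt n)$, one has $\ell=\frac{\lambda k}{k+1}N(1-o(1))$, so viewed inside $W$ the $T_i$ already realise the trivial lower bound, and the task reduces to extending their common length by roughly $\ell/((k+1)^r-1)$.

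Phase 2 places $M_{k+1}$ copies of $a_{k+1}$ uniformly at random into the $n+1$ gaps of $w$, producing $W$. I would aim for a common \emph{insertion profile} $(n_0,n_1,\dots,n_\ell)$: into the $j$-th slot of each $T_i$ (the region of $W$ strictly between its $j$-th and $(j+1)$-st element, with the natural endpoint conventions) we drop $n_j$ new $a_{k+1}$'s, all these choices being pairwise disjoint across twins. Any feasible profile extends the $T_i$'s to $r$-twins of length $\ell+E$ in $W$, where $E=\sum_j n_j$, because the $r$ extended subwords remain order-isomorphic.

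To lower-bound the maximum feasible $E$ a.a.s., I would use a random two-step construction: (i) colour each $a_{k+1}$-position independently and uniformly with one of $r$ colours, earmarking colour-$i$ positions for $T_i$; (ii) for each $i,j$, let $Z_{i,j}$ be the number of colour-$i$ $a_{k+1}$'s lying in slot $j$ of $T_i$, and set $n_j:=\min_i Z_{i,j}$. Disjointness across twins is then automatic (each position has only one colour, and distinct slots of the same twin are disjoint), and $E=\sum_j\min_i Z_{i,j}$. Taking expectation over Phase 2 and the colours, and exploiting that a.a.s.\ the slot sizes $p_{i,j+1}-p_{i,j}$ are well-behaved (from Phase 1), one should arrive at the quantitative estimate $\E E\ge \ell/((k+1)^r-1)(1-o(1))$; concentration of $E$ then follows from a bounded-difference inequality applied jointly to the Phase 2 ball positions and the independent colour assignments.

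The main obstacle is precisely the quantitative lower bound on $\E E$ with the sharp constant $1/((k+1)^r-1)$. Two sources of correlation must be controlled: (a) a single $a_{k+1}$-position sits in a different slot of each twin, so the random variables $Z_{i,\cdot}$ are coupled through the common Phase 2 positions; and (b) the slot sizes $p_{i,j+1}-p_{i,j}$ are themselves random outputs of Phase 1 and in general unequal, forcing an average-case analysis (likely via a Jensen/convexity comparison of $\E\min_i Z_{i,j}$ with functions of $\E Z_{i,j}$). Once this bound on $\E E$ is secured, the conclusion follows from
\[
t^{(r)}(W)\ge \ell+E=\ell\cdot\frac{(k+1)^r}{(k+1)^r-1}(1-o(1))=\Bigl(1+\tfrac{1}{(k+1)^r-1}\Bigr)\tfrac{\lambda k}{k+1}N(1-o(1)).
\]
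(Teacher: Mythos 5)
Your two-phase generation of $W(N;M_1,\dots,M_{k+1})$, the conditioning on Phase~1 producing twins of length $\ell=\lambda n$, and the idea of inserting equal numbers of $a_{k+1}$'s into corresponding slots of the $r$ twins all match the paper's proof. But the decisive step --- the estimate $\E E\ge \ell/((k+1)^r-1)(1-o(1))$ --- is precisely what you leave unproved, and the construction you propose cannot deliver this constant uniformly over the possible placements of the twins. The difficulty is not only the unequal slot sizes you flag. Even in the most structured admissible case, where consecutive elements of each twin sit at adjacent positions of $W'$ so that every slot is a single gap, your independent $r$-colouring discards a constant fraction of the usable letters: the number $Y$ of $a_{k+1}$'s landing in a single gap satisfies $\Pr(Y\ge s)=(k+1)^{-s}(1+o(1))$, your $Z_{i,j}$ is a $\mathrm{Bin}(Y,1/r)$ thinning of it, and $\E[\min_i Z_{i,j}]$ is then strictly below $\E[\min_i Y_{i,j}]=1/((k+1)^r-1)$ by a constant factor (for $r=k=2$ the thinned count is geometric with ratio $1/5$, giving $1/24$ per slot against the required $1/8$). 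Since the lemma must hold for whatever twins Phase~1 happens to supply, and you cannot control their geometry, this is a genuine gap rather than a technicality; a Jensen-type comparison will not rescue the colouring loss.

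The paper sidesteps both obstacles by taking as the ``slot'' attached to the $j$-th element of $T_i$ only the single bin immediately to its right. Because the twins are disjoint, these $r\lambda n$ bins are pairwise distinct, so no colouring is needed and disjointness of the extensions is automatic. Moreover, in the stars-and-bars model the joint distribution of ball counts in any fixed collection of bins does not depend on which bins they are: the probability that $\ell$ prescribed bins receive balls in a prescribed pattern reduces to ratios $\binom{N-\ell}{M-\ell}/\binom{N}{M}=(M/N)^\ell(1+O(\ell^2/N))$. Hence, with $\kappa=(k+1)^{-r}$, the probability that all $r$ bins of the $j$-th tuple contain at least $s$ balls is $\kappa^s(1+o(1))$ regardless of the twins' geometry, the expected contribution of each tuple is $\sum_{s\ge1}\kappa^s=\kappa/(1-\kappa)=1/((k+1)^r-1)$ exactly, and concentration follows from a second-moment (Chebyshev) bound applied to the counts $X_s$ of tuples whose minimum load equals $s$, for $s\le\log n$. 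If you shrink your slots to these single bins and drop the colouring, your outline becomes the paper's argument; as written, it does not reach the claimed constant.
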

\noindent Note that the factor of $\frac{\lambda k}{k+1}N$ comes for free already after Phase 1, so that the actual improvement sits in the parentheses. Also, notice that
$$\frac{n}{N}=\frac{k}{k+1}\left(1+O\left(\frac1{\sqrt n}\right)\right).$$

In the proof of Lemma \ref{k2k+1} we will need a technical estimate on a ratio of binomial coefficients, the proof of which is deferred to  Appendix~\ref{sec:fact}.
\begin{fact}\label{fact:ratio}
Let $M = \Theta(N)$ and $\ell^2 = o(N)$. Then,
\[
\frac{\binom{N-\ell}{M-\ell}}{\binom{N}{M}} = \left( \frac{M}{N} \right)^\ell \left(1+O\left(\frac{\ell^2}{N}\right) \right).
\]
\end{fact}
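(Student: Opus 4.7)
The plan is to reduce the ratio of binomial coefficients to a product of $\ell$ linear fractions and then perform a careful Taylor expansion. First I would rewrite
\[
\frac{\binom{N-\ell}{M-\ell}}{\binom{N}{M}}=\frac{(N-\ell)!\,M!}{N!\,(M-\ell)!}=\prod_{j=0}^{\ell-1}\frac{M-j}{N-j},
\]
which is the identity underlying everything. Pulling out one factor of $M/N$ from each term yields
\[
\prod_{j=0}^{\ell-1}\frac{M-j}{N-j}=\left(\frac{M}{N}\right)^{\ell}\prod_{j=0}^{\ell-1}\frac{1-j/M}{1-j/N},
\]
so the only remaining task is to show that the second product equals $1+O(\ell^{2}/N)$.

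To that end I would take logarithms and use $\log(1-x)=-x+O(x^{2})$, valid because $\ell^{2}=o(N)$ and $M=\Theta(N)$ together ensure that $j/M$ and $j/N$ stay bounded away from $1$ for $0\le j<\ell$. Summing gives
\[
\sum_{j=0}^{\ell-1}\bigl[\log(1-j/M)-\log(1-j/N)\bigr]=\sum_{j=0}^{\ell-1}j\left(\frac{1}{N}-\frac{1}{M}\right)+O\!\left(\sum_{j=0}^{\ell-1}\frac{j^{2}}{N^{2}}\right).
\]
Because $M=\Theta(N)$, the factor $|1/N-1/M|=|M-N|/(MN)$ is $O(1/N)$, so the first sum contributes $O(\ell^{2}/N)$; the error term contributes $O(\ell^{3}/N^{2})$, which is dominated by $O(\ell^{2}/N)$ since $\ell/N=o(1)$. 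Exponentiating recovers the multiplicative factor $1+O(\ell^{2}/N)$.

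The only mild technicality is keeping track of the implied constants in the two uses of $M=\Theta(N)$: once to say that $M-N=O(N)$ (which turns $1/N-1/M$ into $O(1/N)$), and once to justify the Taylor expansion (which requires $j/M$ to stay bounded away from $1$, i.e.\ $\ell=o(M)$, and this follows from $\ell^{2}=o(N)=o(M)$). Neither step presents a real obstacle, so the whole argument is essentially a half-page computation; I expect no conceptual difficulty beyond bookkeeping.
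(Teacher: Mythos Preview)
Your argument is correct, but it follows a different route from the paper's. The paper proceeds via Stirling's formula: it approximates $N!/(N-\ell)!$ and $M!/(M-\ell)!$ separately as $\frac{N^N}{(N-\ell)^{N-\ell}}e^{-\ell}(1+O(\ell/N))$ and the analogous expression in $M$, divides, and then rewrites the resulting quotient as $\left(\frac{M}{N}\right)^\ell \frac{(1-\ell/N)^{N-\ell}}{(1-\ell/M)^{M-\ell}}$, finally using $1+x=e^{x+O(x^2)}$ to extract the $O(\ell^2/N)$ error. You instead write the ratio \emph{exactly} as the telescoping product $\prod_{j=0}^{\ell-1}\frac{M-j}{N-j}$ and Taylor-expand its logarithm term by term. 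Your route is more elementary---no Stirling, no tracking of the $e^{-\ell}$ cancellation---and the key cancellation (that the linear terms in the log expansion combine to $\sum_j j(1/N-1/M)$) is immediate rather than emerging only after the exponents are reorganised. Both proofs ultimately hinge on the same observation that $|1/N-1/M|=O(1/N)$ from $M=\Theta(N)$, and both yield the identical error term; yours just gets there with less machinery.
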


\begin{proof}[Proof of Lemma~\ref{k2k+1}]
We generate $W(N;M_1,\dots,M_{k+1})$ in two rounds as described prior to the statement of the lemma. Let $W'$ be the outcome of round one, that is, an instance of $W(n;M_1,\dots,M_k)$. Further, let $Q'$ be the event that $t^{(r)}(W(n;M_1,\dots,M_k))\ge\lambda n$ and $Q$ -- the ultimate event that
\[
t^{(r)}(W(N;M_1,\dots,M_k,M_{k+1}))\ge\left(1+\frac1{(k+1)^r-1}\right)\frac{\lambda k}{k+1}N(1-o(1)).\]
By the law of total probability,
\begin{equation}\label{PQ}
\PP(Q)=\sum_{W'}\PP(Q|W')\PP(W')=\sum_{W'\in Q'}\PP(Q|W')\PP(W')+o(1).
\end{equation}
We now focus on $\PP(Q|W')$ with $W'\in Q'$, that is, fixing an instance $W'$ of round one with $t^{(r)}(W)\ge\lambda n$, we are going to thoroughly  investigate the outcome of the second round.
Fix $W'=w_1\cdots w_n$ and $r$-twins $T_1,\dots,T_r$ therein of length $|T_1|=\cdots=|T_r|=\lambda n$ (we may assume that $\lambda n$ is an integer).  We treat the $n+1$ spaces before, between, and after the letters of $W'$ as bins and the $M_{k+1}$ letters $a_{k+1}$ as balls. Precisely, bin $b_0$ is in front of $w_1$, for  $i=1,\dots,n-1$, bin $b_i$ lies between $w_i$ and $w_{i+1}$, and bin $b_n$ is to the right of $w_n$.

We group the bins lying immediately  to the right of the elements of the $r$-twins $T_1,\dots,T_r$ into \emph{$r$-tuples of bins} denoted by $R_1,\dots, R_{\lambda n}$. Formally, if $T_j=w_{i_1}^{(j)}\cdots w_{i_{\lambda n}}^{(j)}$, $j=1,\dots,r$, then the $r$-tuple of bins $R_\ell$ consists of the bins $b_{i_\ell}^{(1)},\dots, b_{i_\ell}^{(r)}$, for $\ell=1,\dots,\lambda n$.

\begin{exmp} Let $k=r=3$, $n=27$, and
$$W'=w_1\cdots w_{27}= b\; \colorbox{cyan}{$a$}\; \colorbox{cyan}{$a$}\; c\; \colorbox{Lavender}{$a$}\; c\; b\; \colorbox{cyan}{$b$}\; \colorbox{Lavender}{$a$}\; \colorbox{Lavender}{$b$}\; \colorbox{cyan}{$c$}\; \colorbox{green}{$a$}\; \colorbox{Lavender}{$c$}\; a\; c\; \colorbox{green}{$a$}\; b\; b\; \colorbox{cyan}{$c$}\; \colorbox{green}{$b$}\; a\; \colorbox{green}{$c$}\; \colorbox{Lavender}{$c$}\; \colorbox{green}{$c$}\; b\; a\; b.$$
There are 3-twins of length 5 here, each forming the word $aabcc$, namely, $T_1=w_2w_3w_8w_{11}w_{19}$, $T_2=w_5w_9w_{10}w_{13}w_{23}$, and $T_3=w_{12}w_{16}w_{20}w_{22}w_{24}$. For instance, consider the first triple of bins, $R_1=\{b_2,b_5,b_{12}\}$. If the letter $d$ is inserted into each of these three bins,  a longer word
$$b\; \colorbox{cyan}{$a$}\; \underline{d}\; \colorbox{cyan}{$a$}\; c\; \colorbox{Lavender}{$a$}\; \underline{d}\; c\; b\; \colorbox{cyan}{$b$}\; \colorbox{Lavender}{$a$}\; \colorbox{Lavender}{$b$}\; \colorbox{cyan}{$c$}\; \colorbox{green}{$a$}\; \underline{d}\; \colorbox{Lavender}{$c$}\; a\; c\; \colorbox{green}{$a$}\; b\; b\; \colorbox{cyan}{$c$}\; \colorbox{green}{$b$}\; a\; \colorbox{green}{$c$}\; \colorbox{Lavender}{$c$}\; \colorbox{green}{$c$}\; b\; a\; b,$$
is obtained, and  the length of the twins, each forming now the word $adabcc$, increases by one.
\end{exmp}

Clearly, if each bin from an $r$-tuple  receives $s$ balls (read:~$s$ letters $a_{k+1}$), then the length of the twins can be extended by $s$.
Since we would like to  utilize several $r$-tuples of bins simultaneously, we categorize them with respect to the minimum number of balls.

For $1\le s \le \log n$, let $X_s$ be the number of $r$-tuples of bins with at least $s$ balls in each bin and exactly $s$ balls in some bin (i.e. there is a bin with exactly $s$ balls). We call each such $r$-tuple of bins an \emph{$s$-provider}. For each $\ell=1,\dots,\lambda n$, let $I_\ell$ be the indicator random variable such that $I_\ell=1$ if $R_\ell$ is an $s$-provider and 0 otherwise.
Hence, $X_s=\sum_{\ell=1}^{\lambda n}I_\ell$.

The event $\{I_\ell=1\}$ is the set difference of two events: that  each bin in $R_\ell$ has at least $s$ balls and that each bin in $R_\ell$ has at least $s+1$ balls. Thus, setting $M:=M_{k+1}$,
\[
\PP(I_i=1)=\frac{\binom{N-rs}{M-rs}}{\binom{N}{M}}-\frac{\binom{N-rs-r}{M-rs-r}}{\binom{N}{M}}
\]
and Fact~\ref{fact:ratio} yields
\begin{align*}
\PP(I_i=1) &= \left( \frac{M}{N} \right)^{rs} \left(1-\left( \frac{M}{N} \right)^{r} \right) \left(1+O\left(\frac{s^2}{N}\right) \right)\\
&= \left( \frac{1}{k+1} \right)^{rs} \left(1-\left( \frac{1}{k+1} \right)^{r} \right) \left(1+O\left(\frac1{\sqrt n}\right)\right).
\end{align*}
Thus, setting $\kappa=(k+1)^{-r}$, we have $\E X_s=\lambda n(1-\kappa)\kappa^s \left(1+O\left(1/{\sqrt n}\right)\right)$.

Our goal is to show a.a.s.~simultaneous concentration of each $X_s$, $s\le \log n$, near its expectation.
We are going to use Chebyshev's inequality (followed by the union bound over all $s$)
$$\PP(|X_s-\E X_s|\ge\gamma\E X_s)\le\frac{Var X_s}{(\gamma\E X_s)^2}$$
with $\gamma = 1/\log n$.
To facilitate the future use of the union bound, we need to show that $Var X_s=o((\E X_s)^2/\log^3 n)$, which will imply that $\frac{Var X_s}{(\gamma\E X_s)^2} = o(1/\log n)$.
To this end we write
$$Var X_s=\E(X_s(X_s-1))+\E X_s-(\E X_s)^2.$$
Note that
$\{I_{\ell_1}=I_{\ell_2}=1\}=A\setminus(B_1\cup B_2)$, where $A$ is the event that all $2r$ bins  in $R_{\ell_1}$ and $R_{\ell_2}$ each contains at least $s$ balls, while $B_i$, $i=1,2$, is the event that all bins in $R_{\ell_i}$ each contains at least $s$ balls and those in $R_{\ell_{3-i}}$ -- each at least $s+1$ balls. Thus,
\begin{align*}
\PP(I_{\ell_1}=I_{\ell_2}=1)&=\PP(A)-\PP(B_1)-\PP(B_2)+\PP(B_1\cap B_2)\\
&=\frac{\binom{N-2rs}{M-2rs}}{\binom{N}{M}}-2\frac{\binom{N-2rs-r}{M-2rs-r}}{\binom{N}{M}}+\frac{\binom{N-2rs-2r}{M-2rs-2r}}{\binom{N}{M}}
\end{align*}
which by Fact~\ref{fact:ratio} is equal to
\[
\left( \kappa^{2s}-2\kappa^{2s+1}+\kappa^{2s+2} \right) \left(1+O\left(\frac1{\sqrt n}\right)\right)
=\left((1-\kappa)\kappa^s\right)^2 \left(1+O\left(\frac1{\sqrt n}\right)\right).
\]
Thus,
\begin{align*}
Var X_s
&= \lambda n(\lambda n-1) \left((1-\kappa)\kappa^s\right)^2 \left(1+O\left(\frac1{\sqrt n}\right)\right)\\
&\qquad + \lambda n(1-\kappa)\kappa^s \left(1+O\left(\frac1{\sqrt n}\right)\right)
- \left( \lambda n(1-\kappa)\kappa^s \left(1+O\left(\frac1{\sqrt n}\right)\right)\right)^2\\
&=  \left(\lambda n(1-\kappa)\kappa^s\right)^2 +  O(n^{3/2})\\
&\qquad + O(n)
- \left(\lambda n(1-\kappa)\kappa^s\right)^2 -  O(n^{3/2})
=  O(n^{3/2}) = o((\E X_s)^2/\log^3 n),
\end{align*}
with a big margin.

Hence,
$$\sum_{s=1}^{\lfloor\log n\rfloor}\PP(|X_s-\E X_s|\ge\gamma\E X_s)=o(1),$$
and, in particular, a.a.s.,  for all $1\le s\le \log n$,
\begin{align*}
X_s &\ge \E X_s\left(1-\frac1{\log n}\right)\\
& = \lambda n(1-\kappa)\kappa^s \left(1+O\left(\frac1{\sqrt n}\right)\right)\left(1-\frac1{\log n}\right)
= \lambda n(1-\kappa)\kappa^s \left(1-O\left(\frac1{\log n}\right)\right).
\end{align*}
As each $X_s$ contributes $s$ towards an enlargement of the twins $T_1,\dots,T_r$, we need to calculate $\sum_{s=1}^{\lfloor\log n\rfloor} s X_s$.
Since for every positive integer $p$
\[
\sum_{s=1}^{p}s\kappa^s = \sum_{s=1}^{\infty}s\kappa^s - \sum_{s=p+1}^{\infty}s\kappa^s
= \frac{\kappa}{(1-\kappa)^2} - \frac{(1+(1-k)p)\kappa^{p+1}}{(1-\kappa)^2},
\]
we get
\begin{align*}
\sum_{s=1}^{\lfloor\log n\rfloor} s X_s
&\ge \lambda n(1-\kappa) \left(1-O\left(\frac1{\log n}\right)\right) \sum_{s=1}^{\lfloor\log n\rfloor} s\kappa^s \\
&= \lambda n(1-\kappa) \left(1-O\left(\frac1{\log n}\right)\right) \left( \frac{\kappa}{(1-\kappa)^2} - \Theta(\kappa^{\log n }) \right)\\
&=  \frac{\lambda\kappa}{1-\kappa} n (1-o(1)) =  \frac{\lambda}{(1+k)^r-1} n (1-o(1)).
\end{align*}
Hence, still conditioning on $W'$, a.a.s., there are in $W(N;M_1,\dots,M_k,M_{k+1})$ twins of length at least
\[
\lambda n  + \frac{\lambda}{(1+k)^r-1} n (1-o(1))
= \left(1+\frac{1}{(1+k)^r-1}\right) \lambda n (1-o(1)).
\]
As $n = \frac{k}{k+1}N(1+o(1))$, this means that $\PP(Q|W')=1-o(1)$ for every $W'\in Q'$, and, by~\eqref{PQ}, the lemma follows.
\end{proof}

\begin{rem}\label{simpler} By considering only one random variable $X$ counting $r$-tuples of bins with no bin empty, the proof becomes a bit simpler, but the result is slightly weaker: a.a.s.
$$t^{(r)}(W(N;M_1,\dots,M_k,M_{k+1}))\ge\left(1+\frac1{(k+1)^r}\right)\frac{\lambda k}{k+1}N.$$
We get rid of $o(1)$, but lose $-1$ in the denominator, so overall the bound is lower, though for large $k$ the difference is insignificant.
\end{rem}

\subsection{Applications}\label{apple} In this subsection we present applications of the Boosting Lemma (Lemma \ref{k2k+1}), most importantly, in the proofs of Theorems \ref{main1} and \ref{main2}. Each time the scenario is the same: we pick an existing lower bound on  the length of twins in $W_{k'}(n)$ (e.g., the bound in Theorem~\ref{thm:0.4} above or any deterministic result from~\cite{APP} or~\cite{BukhZ}), translate it via Proposition~\ref{p2M} to the fixed-letter-count model and then apply (iteratively) the Boosting Lemma to get a bound, still in the fixed-letter-count model, for the targeted value of $k>k'$.
At the end we  go back to the binomial model $W_k(n)$ via Proposition \ref{M2p}. For this we need to have our estimate valid for all $M_i$, $i=1,\dots,k$, satisfying the assumptions of Proposition~\ref{M2p}.
This means, however, that  also the input bound has to be valid for a corresponding range of $M_i$, $i=1,\dots,k'$.

Let us now analyze what it really means. Fix $k$ and let $M_i=n/k+w_i$, $i=1,\dots,k$, where $|w_i|=O(\sqrt n)$ and $\sum_{i=1}^kw_i=0$. How these assumptions alter when we drop $M_k$? Let $n_{k-1}=M_1+\cdots+M_{k-1}$. Then, for each $i=1,\dots,k-1$,
$$n_{k-1}=\frac{k-1}kn+\sum_{i=1}^{k-1}w_i=(k-1)M_i-(k-1)w_i-w_k,$$
thus
$$M_i=\frac{n_{k-1}}{k-1}+w_i+\frac{w_k}{k-1}=\frac{n_{k-1}}{k-1}+w_i^{(k-1)},$$
where $w_i^{(k-1)}=w_i+\frac{w_k}{k-1}$. Note that $\sum_{i=1}^{k-1}w_i^{(k-1)}=0$ as it should. We may iterate this relation all the way down to $k'$ (where we want to begin the process of applying Lemma \ref{k2k+1}), obtaining for $j=k-1,\dots,k'$ and $i=1,\dots,k'$,
with $n_j=M_1+\cdots+M_{j}$,

\begin{equation}\label{MM}
M_i=\frac{n_j}{k}+w_i^{(j)},
\end{equation}
where
\begin{equation}\label{ww}
w_i^{(j)}=w_i+\frac1{j}\sum_{q=j+1}^kw_q.
\end{equation}
Observe that $w_i^{(k')}=O(\sqrt n)$, so we stay within the range required in Proposition \ref{p2M}. Below we illustrate how to apply the Boosting Lemma.
\begin{exmp}\label{375}
Let $r=2$ and $k=3$. We know, either by our Lemma \ref{interwining} or by the result in~\cite{APP},  that a.a.s.~a random binary word $W_2(n)$ contains twins of length $\left(\tfrac12-o(1)\right)n$. We would like to deduce from this, via Lemma \ref{k2k+1}, a lower bound on the length of twins in the random ternary word $W_3(n)$ (so, we take $k'=2$ here).

Fix $M_i=n/3+w_i$, $i=1,2,3$, where $|w_i|=O(\sqrt n)$ and $w_1+w_2+w_3=0$. Suppressing $M_3$, we get, with $n'=n-M_3$, an instance of $W(n';M_1,M_2)$ which satisfies the assumptions of Proposition \ref{p2M}, that is, for $i=1,2$, we have $M_i=n'/2+w_i^{(2)}$, where $w_i^{(2)}=w_i+\frac{w_3}{2}=O(\sqrt n)$. Thus, we may conclude that a.a.s.~$t^{(2)}(W(n';M_1,M_2))\ge\lambda n'$ with $\lambda=\tfrac12-o(1)$. In turn, by Lemma \ref{k2k+1}, a.a.s.
$$t^{(2)}(W(n;M_1,M_2,M_3))\ge\left(1+\frac18\right)\frac{2\lambda}3 n(1-o(1))\ge 0.375n(1-o(1)).$$
Since this is true for all choices of $M_i$ as above, by Proposition \ref{M2p}, we finally get that a.a.s.~$t^{(2)}(W_3(n))\ge0.375n(1-o(1))$. This is much less than the bound in Theorem~\ref{thm:0.4}, so the result has some value only for computer-skeptical readers. On the other hand, it is still better than the bound $t^{(2)}(W_3(n))\ge0.34n(1-o(1))$ in \eqref{Bukh-Zhou t_k(n)1}, though the latter holds for \emph{all} ternary words.

Continuing with this example, let us iterate applications of Lemma \ref{k2k+1} till, say, $k=10$.
Skipping details, we see that the obtained bound is a.a.s.
\begin{align*}
t^{(2)}(W_{10}(n))&\ge\frac98\cdot\frac{16}{15}\cdot\frac{25}{24}\cdot\frac{36}{35}\cdot\frac{49}{48}\cdot\frac{64}{63}\cdot\frac{81}{80}\cdot\frac{100}{99}\cdot\frac{n}{10}(1+o(1))\\
&=\frac{15}{11}\cdot\frac{n}{10}(1-o(1))=1.\overline{36}\cdot \frac{n}{10}(1-o(1)).
\end{align*}
It is perhaps interesting to compare the above bound with $1.337(n/10)$ -- one obtained by the simpler and weaker version of Lemma \ref{k2k+1} mentioned in Remark \ref{simpler}.
\end{exmp}

\begin{proof}[Proof of Theorem \ref{main1}]
As a starting point we take our computer-assisted Theorem~\ref{thm:0.4}, so we set $k'=3$. Preparing for the final transition to the binomial model, fix any $M_i=n/k+w_i$, $i=1,\dots,k$, where $|w_i|=O(\sqrt n)$ and $\sum_{i=1}^kw_i=0$. By \eqref{MM} with $j=3$, we then have $M_i=n_3/3+w_i^{(3)}$, where $n_3=M_1+M_2+M_3$ and  $w_i^{(3)}=O(\sqrt n)$, $i=1,2,3$, by \eqref{ww}.

By Theorem \ref{thm:0.4}, $t(W_3(n))\ge0.411n$ with probability at least $1-e^{-\Omega(n/(\log n))}=1-o(n^{-k/2})$. Thus, by Proposition \ref{p2M}, a.a.s.~$t(W(n_3;M_1,M_2,M_3))\ge0.411n$, and, in turn, by Boosting Lemma (Lemma \ref{k2k+1}) with $\lambda=0.411$, a.a.s.~$t(W(n_4;M_1,M_2,M_3,M_4))\ge\frac{16}{15}\cdot 1.233\cdot \frac{n_4}{4}$. We iterate this transition (or use  induction) until reaching the random word $W(n;M_1,\dots,M_k)$.
Observe that
\[
\prod_{j=4}^k\frac{j^2}{j^2-1}
= \prod_{j=4}^k\frac{j^2}{(j-1)(j+1)}
= \frac{4^2}{3\cdot 5} \cdot \frac{5^2}{4\cdot 6} \cdot \frac{6^2}{5\cdot 7} \cdots \frac{k^2}{(k-1)(k+1)} = \frac{4k}{3(k+1)}.
\]
Hence, we get, a.a.s.,
$$t^{(2)}(W(n;M_1,\dots,M_k))\ge \prod_{j=4}^k\frac{j^2}{j^2-1} \cdot (3\rho_{14}) \cdot\frac{n}{k} (1-o(1))
	\ge \frac{1.64k}{k+1}  \cdot\frac{n}{k}.$$
(We drop $0.004$ to ``kill'' the error term $1-o(1)$.)

As this is true for all choices of $M_i=n/k+w_i$, we are in position to apply Proposition~\ref{M2p} and conclude that a.a.s.
$t^{(2)}(W_k(n))\ge \frac{1.64k}{k+1}  \cdot\frac{n}{k}.$
\end{proof}

\begin{proof}[Proof of Theorem \ref{main2}] This proof is very similar except that we now take $k'=r$ and launch off with the bound $t^{(r)}(W_{k'}(n))=n/r-o(n)$ from  Lemma~\ref{interwining} (or its deterministic counterpart from~\cite{APP}). Other minor differences are that for $r\ge3$ there is no nice formula for the product $\Pi_{r,k}=\prod_{j=r+1}^k\frac{j^r}{j^r-1}$ and, unlike before, we cannot get rid of the error term $1-o(1)$. We omit the details.
\end{proof}

\section{Concluding remarks}\label{Section Concluding}
Let us conclude the paper with some open questions and suggestions for future studies. The first problem naturally concerns twins in ternary words.
\begin{prob} Does $t(n,3)=n/2-o(n)$?
\end{prob}
Even if the answer is negative it can still be true that \emph{almost all} ternary words contain twins of such length, that is, a.a.s. $t(W_3(n))=n/2-o(n)$. It is known, as demonstrated in~\cite{BukhZ}, that a.a.s. $t(W_4(n))\leq0.4932n$ which makes a similar statement for random \emph{quaternary} words false. In general, it is not clear how close  $t(W_k(n))$ and $t(n,k)$ are from each other.

\begin{prob}
	Does $t(W_k(n))=t(n,k)+o(n)$ hold a.a.s.~for all $k\geq 2$?
\end{prob}
\noindent By the result of Axenovich, Person, and Puzynina~\cite{APP} we know that this is the case for $k=2$.

One may also consider more restricted notions of twins in words reflecting their placement in the word. For instance, it may happen that twins occupy together a connected segment  forming a \emph{shuffle square}, as in the example below:
$$a\;b\;\colorbox{cyan}{c}\;\colorbox{cyan}{a}\;\colorbox{Lavender}{c}\;\colorbox{cyan}{b}\;\colorbox{Lavender}{a}\;\colorbox{Lavender}{b}\;c\;b\;a\;c.$$
As proved recently by Bulteau, Jug\'{e}, and Vialette \cite{BulteauJV}, there exist arbitrarily long words over a $6$-letter alphabet containing no shuffle squares. It is not known, however, if the size of the alphabet in this result is optimal and, more generally, what is the maximum length of a shuffle square guaranteed to be present in \emph{every} (long)  $k$-letter word, $k=2,\dots,5$.

Here, we formulate this question for random words. 

\begin{prob}
	What is the expected maximum length of a shuffle square in a random word $W_k(n)$? 	
\end{prob}

This question sounds particularly intriguing in the light of a recent conjecture by He, Huang, Nam, and Thaper \cite{HeHNT} (mentioned already in Section \ref{Section Twins History}), that almost every binary word (with even numbers of ones and zeros) is a shuffle square.  For more on counting shuffle squares and their variants see, e.g., \cite{HenshallRS}.  Similar problems can also be considered for \emph{shuffle cubes}, or more generally, for arbitrary \emph{shuffle $r$-powers}, in analogy to general $r$-twins.

An even more restricted version of twins is obtained when each twin in a shuffle square occupies itself a connected segment, like in the example below:
$$a\;b\;\colorbox{cyan}{c}\;\colorbox{cyan}{a}\;\colorbox{cyan}{b}\;\colorbox{Lavender}{c}\;\colorbox{Lavender}{a}\;\colorbox{Lavender}{b}\;c\;b\;a\;c.$$
This basic structure is well-known and widely studied in combinatorics on words under the name of a \emph{square} or a \emph{repetition} (see \cite{Lothaire1}, \cite{RampersadShallit}). By the famous result of Thue~\cite{Thue} we know that there exist ternary words of any length with no squares altogether. In the binary case, it is known that there exist arbitrarily long words avoiding squares of length greater than $2$, as proved by Fraenkel and Simpson~\cite{FraenkelSimpson}. However, not much is known about squares in random words.

\begin{prob}
	What is the expected maximum length of a square in a random word $W_k(n)$?
\end{prob}


\appendix

\section{Maple code}\label{sec:maple}

Here we present a simple program written in \emph{Maple} that counts the number of ternary words of length $s$ that contain twins of length at least $t$. This easily allows one to calculate the number of ternary words of length $s$ with twins of length $t$ but not $t+1$ (defined as $\lambda_t$ in the proof of Theorem~\ref{thm:0.4}). Indeed, $\lambda_t$ is equal to the difference between the number of words having twins of length at least $t$ minus the one with twins of length at least $t+1$.

For the sake of clarity, we did not attempt to optimize our program.

{\fontfamily{pcr}\selectfont
\begin{lstlisting}[language=Maple, caption={The main procedure.}, label={code:procedure}, basicstyle=\footnotesize]
with(combinat): # Load combinatorial functions

##########################################################
# This procedure returns the number of ternary words
# of length s that contain twins of length at least t
##########################################################
count_twins := proc(s, t)
	local P_list, P, sizeP, K, sizeK, L, sizeL, L1, L2, counter, p, k, l, i:
	
	# Generate all ternary words of length s
	P_list := [seq(1, i = 1 .. s), seq(2, i = 1 .. s), seq(3, i = 1 .. s)]:
	P := permute(P_list, s):
	sizeP := nops(P):
	
	# Generate all 2t-subsets of {1,...,s}
	K := choose([seq(i, i = 1 .. seq_length)], 2*t):
	sizeK := nops(K):
	
	# Partition {1,...,2t} into two sets, each of size t
	L := setpartition([seq(i, i = 1 .. 2*t)], t):
	sizeL := nops(L):
	
	counter := 0: # Number of words with twins of length at least t
	
	for p to sizeP # Over all ternary words
	do
		for k to sizeK # Over all 2t subsets of {1,...,s}
		do
			for l to sizeL # Over all partitions of {1,...,2t}
			do
				L1 := L[l][1]: # Indices for the first possible twin
				L2 := L[l][2]: # Indices for the second possible twin
				
				if evalb(P[p][K[k][L1]] = P[p][K[k][L2]]) # Twins?
				then
					# Twins found
					counter := counter + 1:
					
					# Move to the next word in P
					l := sizeL: # Break the loop over l
					k := sizeK: # Break the loop over k
				end if:
			end do: # Over l
		end do: # Over k
	end do: # Over p
	counter: # Return the value of the variable counter
end proc:
\end{lstlisting}
}

In Listing~\ref{code:call}  we show how the procedure from Listing~\ref{code:procedure} can be used yielding results summarized in Table~\ref{table:lambda}.

{\fontfamily{pcr}\selectfont
\begin{lstlisting}[language=Maple, caption={Calling the main procedure.}, label={code:call}, basicstyle=\footnotesize]
seq_length := 6: # Length of the ternary words
min_twin_length := 1: # Min length of possible twins
max_twin_length := floor(seq_length/2): # Max length of possible twins

for i from max_twin_length by -1 to min_twin_length do
	c[i] := count_twins(seq_length, i): # Number of twins of length >= i
	if i = max_twin_length
	then
		d[i] := c[i]: # Since there are no twins of length max_twin_length+1, c[max_twin_length] stores the number of words with the longest twins of length max_twin_length
	else
		d[i] := c[i] - c[i + 1]:
	end if:
	printf("Number of words with the longest twins of length %d = %d\n", i, d[i]):
end do:
\end{lstlisting}
}

\section{Proof of Fact \ref{fact:ratio}}\label{sec:fact}

\begin{proof}[Proof of Fact \ref{fact:ratio}]
We are going to use the following form of Stirling's formula:
\[
N! = \sqrt{2\pi N}\left( \frac{N}{e}\right)^N \left(1+O\left(\frac{1}{N}\right) \right).
\]
Hence,
\[
\frac{N!}{(N-\ell)!} = \sqrt{\frac{N}{N-\ell}} \cdot \frac{N^N}{(N-\ell)^{N-\ell}} \cdot e^{-\ell} \left(1+O\left(\frac{1}{N}\right) \right).
\]
Since
\[
\sqrt{\frac{N}{N-\ell}} = \left(1 +\frac{\ell}{N-\ell} \right)^{1/2} = 1+O\left(\frac{\ell}{N}\right),
\]
we get
\[
\frac{N!}{(N-\ell)!} = \frac{N^N}{(N-\ell)^{N-\ell}} \cdot e^{-\ell} \left(1+O\left(\frac{\ell}{N}\right) \right).
\]
Consequently, as
\[
\frac{\binom{N-\ell}{M-\ell}}{\binom{N}{M}} = \frac{{M!}/{(M-\ell)!}}{{N!}/{(N-\ell)!}},
\]
we obtain
\[
\frac{\binom{N-\ell}{M-\ell}}{\binom{N}{M}}
=\frac{{M^M}/{(M-\ell)^{M-\ell}}}{{N^N}/{(N-\ell)^{N-\ell}}} \left(1+O\left(\frac{\ell}{N}\right) \right).
\]
Further, since $1+x = e^{x+O(x^2)}$ whenever $x\to 0$,
\[
\frac{{M^M}/{(M-\ell)^{M-\ell}}}{{N^N}/{(N-\ell)^{N-\ell}}}
= \left( \frac{M}{N} \right)^\ell \frac{\left(1-\frac{\ell}{N} \right)^{N-\ell}}{\left(1-\frac{\ell}{M} \right)^{M-\ell}}
= \left( \frac{M}{N} \right)^\ell \frac{e^{-\ell(N-\ell)/N +O(\ell^2/N)}}{e^{-\ell(M-\ell)/M +O(\ell^2/M)}}
= \left( \frac{M}{N} \right)^\ell e^{O(\ell^2/N)},
\]
where the last equality follows, because $-\ell(N-\ell)/N + \ell(M-\ell)/M = \ell^2(M-N)/(MN)$ and $M = \Theta(N)$. Finally, as by assumption $\ell^2 = o(N)$, we have $e^{O(\ell^2/N)} = 1 + O(\ell^2/N)$ and so
\[
\frac{\binom{N-\ell}{M-\ell}}{\binom{N}{M}}
=\left( \frac{M}{N} \right)^\ell \left( 1 + O\left(\frac{\ell^2}{N}\right) \right)\left(1+O\left(\frac{\ell}{N}\right) \right)
=\left( \frac{M}{N} \right)^\ell \left( 1 + O\left(\frac{\ell^2}{N}\right) \right),
\]
as required.
\end{proof}

\end{document}